%
\documentclass[runningheads]{llncs}
\usepackage[T1]{fontenc}
%
\usepackage{bm}
\usepackage{amssymb}
\usepackage{amsmath}

\usepackage{graphicx}
%
\usepackage{hyperref}
\usepackage{color}

\usepackage[capitalise]{cleveref}

\usepackage{scalerel}
\usepackage{tikz}
\usetikzlibrary{svg.path}

\definecolor{orcidlogocol}{HTML}{A6CE39}
\tikzset{
  orcidlogo/.pic={
    \fill[orcidlogocol] svg{M256,128c0,70.7-57.3,128-128,128C57.3,256,0,198.7,0,128C0,57.3,57.3,0,128,0C198.7,0,256,57.3,256,128z};
    \fill[white] svg{M86.3,186.2H70.9V79.1h15.4v48.4V186.2z}
                 svg{M108.9,79.1h41.6c39.6,0,57,28.3,57,53.6c0,27.5-21.5,53.6-56.8,53.6h-41.8V79.1z M124.3,172.4h24.5c34.9,0,42.9-26.5,42.9-39.7c0-21.5-13.7-39.7-43.7-39.7h-23.7V172.4z}
                 svg{M88.7,56.8c0,5.5-4.5,10.1-10.1,10.1c-5.6,0-10.1-4.6-10.1-10.1c0-5.6,4.5-10.1,10.1-10.1C84.2,46.7,88.7,51.3,88.7,56.8z};
  }
}

\newcommand\orcidicon[1]{\href{https://orcid.org/#1}{\mbox{\scalerel*{
\begin{tikzpicture}[yscale=-1,transform shape]
\pic{orcidlogo};
\end{tikzpicture}
}{|}}}}

\begin{document}

\title{Learning discrete Lagrangians for variational PDEs from data and detection of travelling waves}
\titlerunning{Learning variational PDEs from data}
%
\author{
Christian Offen ${\protect \orcidicon{0000-0002-5940-8057}}$ 
\and
Sina Ober-Blöbaum
}
\authorrunning{Offen and Ober-Blöbaum}
%
\institute{Paderborn University, Warburger Str. 100, 33098 Germany
\email{christian.offen@uni-paderborn.de}\\
\url{https://www.uni-paderborn.de/en/person/85279}
}
\maketitle              
%

\def\d{\mathrm{d}}
\def\D{\mathrm{D}}
\def\p{\partial}
\def\R{\mathbb{R}}
\def\N{\mathbb{N}}
\def\Z{\mathbb{Z}}

\begin{abstract}
The article shows how to learn models of dynamical systems from data which are governed by an unknown variational PDE. 
Rather than employing reduction techniques, we learn a discrete field theory governed by a discrete Lagrangian density $L_d$ that is modelled as a neural network. Careful regularisation of the loss function for training $L_d$ is necessary to obtain a field theory that is suitable for numerical computations: we derive a regularisation term which optimises the solvability of the discrete Euler--Lagrange equations.
Secondly, we develop a method to find solutions to machine learned discrete field theories which constitute travelling waves of the underlying continuous PDE.

\keywords{System identification  \and discrete Lagrangians \and travelling waves}
\end{abstract}

\paragraph*{Publication information}
For the published version of the article refer to \cite{DLNNDensity}.

\section{Introduction}

In data-driven system identification, a model is fitted to observational data of a dynamical system. The quality of the learned model can greatly be improved when prior geometric knowledge about the dynamical system is taken into account such as conservation laws \cite{symplecticShadowIntegrators,LagrangianShadowIntegrators,LNN,HNN,Bertalan2019}, symmetries \cite{SymLNN,SymHNN,Dierkes2021}, equilibrium points \cite{Ridderbusch2021}, or asymptotic behaviour of its motions. 

One of the most fundamental principles in physics is the variational principle: it says that motions constitute stationary points of an action functional. The presence of variational structure is related to many qualitative features of the dynamics such as the validity of Noether's theorem: symmetries of the action functional are in correspondence with conservation laws.
To guarantee that these fundamental laws of physics hold true for learned models, Greydanus et al propose to learn the action functional from observational data \cite{LNN} (Lagrangian neural network) and base prediction on numerical integrations of Euler--Lagrange equations. Quin proposes to learn a discrete action instead \cite{Qin2020}. An ansatz of a discrete model has the advantage that it can be trained with position data of motions only. In contrast, learning a continuous theory typically requires information about higher derivatives (corresponding to velocity, acceleration, momenta, for instance) which are typically not observed but only approximated. Moreover, the discrete action functional (once it is learned) can naturally be used to compute motions numerically.

However, in \cite{LagrangianShadowIntegrators} the authors demonstrate that care needs to be taken when learned action functionals are used to compute motions: even if the data-driven action functional is perfectly consistent with the training data (i.e.\ the machine learning part of the job is successfully completed), minimal errors in the initialisation process of numerical computations get amplified. 
As a remedy, the authors develop Lagrangian Shadow Integrators which mitigate these amplified numerical errors based on a technique called backward error analysis. Moreover, using backward error analysis they relate the discrete quantities to their continuous analogues and show how to analyse qualitative aspects of the machine learned model.
Action functionals are not uniquely determined by the motions of a dynamical system.
Therefore, regularisation is needed to avoid learning degenerate theories. While in \cite{LagrangianShadowIntegrators} the authors develop a regularisation strategy when the action functional is modelled as a Gaussian Process, Lishkova et al develop a corresponding regularisation technique for artificial neural networks in \cite{SymLNN} in the context of ordinary differential equations (ODEs).

In this article we show how to learn a discrete action functional from discrete data which governs solutions to partial differential equations (PDEs) using artificial neural networks extending the regularisation strategy which we have developed in \cite{SymLNN}.
Our technique to learn (discrete) densities of action functionals can be contrasted to approaches where a spatial discretisation of the problem is considered first, followed by structure-preserving model reduction techniques (data-driven or analytical) \cite{Glas2023,Carlsberg2015}
and then a model for the reduced system of ODEs is learned from data \cite{Sharma2022LagrangianROM,Blanchette2022,Blanchette2020}.

Travelling waves solutions of PDEs are of special interest due to their simple structure. When a discrete field theory for a continuous process described by a PDE is learned, they typically "get lost" because the mesh of the discrete theory is incompatible with certain wave speeds. In this article, we introduce a technique to find the solutions of data-driven discrete theories that correspond to travelling waves in the underlying continuous dynamics (shadow travelling waves).
The article contains the following novelties:
\begin{itemize}
\item We transfer our Lagrangian ODE regularising strategy \cite{SymLNN} to data-driven discrete field theories in a PDE setting and provide a justification using numerical analysis. 

\item The development of a technique to detect travelling waves in data-driven discrete field theories.

\end{itemize}

The article proceeds with a review of variational principles (\cref{sec:review}), an introduction of our machine learning architecture and derivation of the regularisation strategy (\cref{sec:mlarchitectures}). In \cref{sec:TW}, we define the notion of {\em shadow travelling waves} and show how to find them in data-driven models. The article concludes with numerical examples relating to the wave equation (\cref{sec:Experiment}).

\section{Discrete and continuous variational principles}\label{sec:review}

\subsubsection{Continuous variational principles}

Many differential equations describing physical phenomena such as waves, the state of a quantum system, or the evolution of a relativistic fields are derived from a variational principle: solutions are characterised as critical points of a (non-linear) functional $S$ defined on a suitable space of functions $u \colon X \to \R^d$ and has the form
\begin{equation}\label{eq:SGeneral}
S(u) = \int_X L(\bm{x},u(\bm{x}),u_{x_0}(\bm{x}),u_{x_1}(\bm{x}),\ldots,u_{x_n}(\bm{x})) \d \bm{x},
\end{equation}
where $\bm{x}= (x_0,x_1,\ldots,x_n) \in X$ and $u_{x_j}$ denote partial derivatives of $u$. This variational principle can be referred to as a {\em first-order field theory}, since only derivatives to the first order of $u$ appear. In many applications the free variable $x_0$ corresponds to time and is denoted by $x_0=t$.
The functional $S$ is stationary at $u$ with respect to all variations $\delta u \colon X \to \R$ vanishing at the boundary (or with the correct asymptotic behaviour) if and only if the Euler--Lagrange equations
\begin{equation}\label{eq:EL}
0 =\mathrm{EL}(L) 
= \frac{\p L}{\p u} - \sum\nolimits_{j=0}^n \frac{\d}{\d x_j}\frac{\p L}{\p u_{x_j}}
\end{equation}
are fulfilled on $X$.

\begin{example}\label{ex:wave}
The wave equation
\begin{equation}\label{eq:wavePDE}
u_{tt}(t,x)-u_{xx}(t,x) + \nabla V(u(t,x))=0
\end{equation}
is the Euler--Lagrange equation $0 =\mathrm{EL}(L)$ to the Lagrangian
\begin{equation}\label{eq:waveL}
L(u,u_t,u_x)= \frac{1}2 (u_t^2-u_x^2)-V(u).
\end{equation}
Here $\nabla V$ denotes the gradient of a potential $V$.
\end{example}

\begin{remark}\label{rem:LGaugeFreedom}
Lagrangians are not uniquely determined by the motions of a dynamical system: two first order Lagrangians $L$ and $\tilde L$ yield equivalent Euler--Lagrange equations if $s L-\tilde L$ ($s \in \R \setminus \{0\}$) is a total divergence $\nabla_{\bm x} \cdot F(\bm x,u(\bm x)) = \frac{\p }{\p x_1}(F^1(\bm x,u(\bm x)))+\ldots +\frac{\p }{\p x_n}F^n(\bm x,u(\bm x))$ for $F = (F^1,\ldots,F^n)\colon X\times \R^d \to \R^n$. 
\end{remark}


\subsubsection{Discrete variational principle}\label{sec:DiscreteDEL}

For simplicity, we consider the two dimensional compact case: let $X=[0,T] \times [0,l]/\{0,l\}$ with $T,l >0$. Here $[0,l]/\{0,l\}$ is the real interval $[0,l]$ with identified endpoints (periodic boundary conditions). Consider a uniform, rectangular mesh $X_\Delta$ on $X$ with mesh widths $\Delta t=\frac TN$ and $\Delta x = \frac  lM$ for $N,M \in \N$.
A discrete version of the action functional \eqref{eq:SGeneral} is 
\begin{equation*}\label{eq:SdGeneral}
S_d \colon (\R^d)^{(N-1)\times M} \to \R, \quad S_d(U)
= \Delta t \Delta x \sum_{i=1}^{N-1}\sum_{j=0}^{M-1}
L_d(u_j^i,u_j^{i+1},u_{j+1}^{i})
\end{equation*}
for a discrete Lagrangian density $L_d \colon (\R^d)^3 \to \R$ together with temporal boundary conditions for $u^{0}_j \in \R^d$ and $u^N_j \in \R^d$ for $j=0,\ldots,M-1$. Above $U$ denotes the values $(u^{i}_{j})_{j=0,\ldots N-1}^{i=1,\ldots M-1}$ on inner mesh points. We have $u^{i}_{M}=u^{i}_0$ by the periodicity in space.
Solutions of the variational principle are $U \in (\R^d)^{(N-1)\times M}$ such that $U$ is a critical point of $S_d$. This is equivalent to the condition that for all $i = 1,\ldots,N-1$ and $j=0,\ldots,M-1$ the discrete Euler--Lagrange equations
\begin{equation}\label{eq:DEL}
\begin{split}
\frac{\p}{\p u^i_j}
\big(
L_d(u^{i}_{j},u^{i+1}_{j},u^{i}_{j+1})
+ L_d(u^{i-1}_{j},u^{i}_{j},u^{i-1}_{j+1})
+ L_d(u^{i}_{j-1},u^{i+1}_{j-1},u^{i}_{j})
\big)
=0
\end{split}
\end{equation}
are fulfilled. The expression on the left of \eqref{eq:DEL} is abbreviated as $\mathrm{DEL}(L_d)^{i}_{j}(U)$ in the following.

\begin{remark}\label{rem:boundary}
Instead of periodic boundary conditions in space, $S_d$ can be adapted to other types of boundary conditions
such as Dirichlet- or Neumann conditions.
\end{remark}

\begin{example}\label{ex:waveDiscrete}
The discretised wave equation
\begin{equation}\label{eq:waveDiscretised}
\frac{(u^{i-1}_{j}-2u^{i}_{j}+u^{i+1}_{j})}{\Delta t^2}
-\frac{(u^{i}_{j-1}-2u^{i}_{j}+u^{i}_{j+1})}{\Delta x^2}
+ \nabla V(u^{i}_{j})=0
\end{equation}
is the discrete Euler--Lagrange equations to the discrete Lagrangian
\begin{equation*}\label{eq:LdWave}
L_d(u^{i}_{j},u^{i+1}_{j},u^{i}_{j+1})
= \frac 12 \left(\frac {u^{i+1}_{j}-u^{i}_{j}}{\Delta t^2}\right)^2
- \frac 12 \left(\frac {u^{i}_{j+1}-u^{i}_{j}}{\Delta x^2}\right)^2
- V(u^{i}_{j}).
\end{equation*}

\end{example}

\begin{remark}\label{rem:LdGaugeFreedom}
	In analogy to \cref{rem:LGaugeFreedom}, notice that $L_d$ and $\tilde{L}_d$ yield the same discrete Euler--Lagrange equations \eqref{eq:DEL} if
	\begin{equation}\label{eq:LdGaugeFreedom}
		L_d(a,b,c) - s\tilde{L}_d(a,b,c) = \chi_1(a)-\chi_1(b) + \chi_2(a)-\chi_2(c) + \chi_3(b)-\chi_3(c)
	\end{equation}
	for differentiable functions $\chi_1,\chi_2,\chi_3 \colon X \to \R$ and $s \in \R \setminus \{0\}$.
\end{remark}

\begin{remark}\label{rem:SolveDEL}
	If $\mathrm{DEL}(L_d)^{i}_{j}(U)=0$ (see \eqref{eq:DEL}) and if $\frac{\p^2 L_d}{\p u^{i}_{j}\p u^{i+1}_{j}}(u^{i}_{j},u^{i+1}_{j},u^{i}_{j+1})$ is of full rank, then \eqref{eq:DEL} is solvable for $u^{i+1}_{j}$ as a function of $u^{i}_{j}$, $u^{i}_{j+1}$, $u^{i-1}_{j}$, $u^{i-1}_{j+1}$, $u^{i}_{j-1}$, $u^{i+1}_{j-1}$ by the implicit function theorem locally around a solution of \eqref{eq:DEL}.
	All of these points correspond to mesh points that either lie to the left or below the point with indices $(i,j)$.
	If $u^1_{j}$ is known for $0 \le j\le M-1$, then utilising the boundary conditions $u^0_j \in \R^d$ and $u^{i}_{0}=u^{i}_{M}$ we can compute $U$ by subsequently solving \eqref{eq:DEL}. This corresponds to the computation of a time propagation.
\end{remark}

The following Proposition analyses the convergence of Newton-Iterations when solving \eqref{eq:DEL} for $u^{i+1}_{j}$, as is required to compute time propagations. It introduces a quantity $\rho^\ast$ that relates to how well the iterations converge. We will make use of this quantity in the design of our machine learning framework.

\begin{proposition}\label{prop:SolveDELNewton}
Let $u^{i}_{j}$, $u^{i+1}_{j}$, $u^{i}_{j+1}$, $u^{i-1}_{j}$, $u^{i-1}_{j+1}$, $u^{i}_{j-1}$, $u^{i+1}_{j-1}$ such that \eqref{eq:DEL} holds. Let $O\subset \R^d$ be a convex, neighbourhood of $u^\ast=u^{i+1}_{j}$, $\| \cdot \|$ a norm of $\R^d$ inducing an operator norm on $\R^{d \times d}$.
Define $p(u) :=\frac{\p^2 L_d}{\p u^{i}_{j}\p u}(u^i_j,u,u^i_{j+1})$ and let $\theta$ and $\overline{\theta}$ be Lipschitz constants on $O$ for $p$
and for $\mathrm{inv} \circ p$, respectively, where $\mathrm{inv}$ denotes matrix inversion.
Let
\begin{equation}\label{eq:rho}
\rho^\ast := \left\|\mathrm{inv}(p(u^\ast))\right\|
= \left\|\left(\frac{\p^2 L_d}{\p u^{i}_{j}\p u^\ast}(u^i_j,u^\ast,u^i_{j+1})\right)^{-1}\right\|
\end{equation}
and let $f(u^{(n)})$ denote the left hand side of \eqref{eq:DEL} with $u_j^{i+1}$ replaced by $u^{(n)}$.
If $\|{u}^{(0)}-{u}^\ast\| \le \min\left(\frac{\rho^\ast}{\overline{\theta}}, \frac{1}{2\theta\rho^\ast}\right)$ for ${u}^{(0)}\in \mathcal{O}$, then the Newton Iterations
${u}^{(n+1)}:= {u}^{(n)} - \mathrm{inv}(p(u^{(n)})) f(u^{(n)})$
converge quadratically against ${u}^{\ast}$, i.e.\
\begin{equation}\label{eq:NewtonQuadraticConvergence}
\|{u}^{(n+1)} - {u}^{\ast} \| \le \rho^\ast \theta \|{u}^{(n)} - {u}^{\ast} \|^2. 
\end{equation}

\end{proposition}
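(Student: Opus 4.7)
The plan is to view the Newton iteration through the standard fixed-point / Kantorovich lens, using that $f'(u) = p(u)$ (since in \eqref{eq:DEL} only the first summand $L_d(u_j^i,u_j^{i+1},u_{j+1}^i)$ depends on $u_j^{i+1}$, and differentiating the $\partial/\partial u_j^i$ term once more in that slot gives exactly $p$). Starting from the identity
\begin{equation*}
u^{(n+1)}-u^\ast = \mathrm{inv}(p(u^{(n)}))\Bigl[p(u^{(n)})(u^{(n)}-u^\ast) - \bigl(f(u^{(n)})-f(u^\ast)\bigr)\Bigr],
\end{equation*}
obtained by inserting $f(u^\ast)=0$ into the Newton step, I would bound the two factors separately.

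For the bracketed term, I would use the fundamental theorem of calculus along the segment from $u^\ast$ to $u^{(n)}$ (valid since $O$ is convex):
\begin{equation*}
f(u^{(n)})-f(u^\ast) = \int_0^1 p\bigl(u^\ast+t(u^{(n)}-u^\ast)\bigr)(u^{(n)}-u^\ast)\,\d t,
\end{equation*}
so the bracket equals $\int_0^1\bigl[p(u^{(n)})-p(u^\ast+t(u^{(n)}-u^\ast))\bigr](u^{(n)}-u^\ast)\,\d t$. The Lipschitz constant $\theta$ of $p$ on $O$ then yields the bound $\tfrac{\theta}{2}\|u^{(n)}-u^\ast\|^2$ (the factor $1-t$ from the argument gap integrates to $1/2$).

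For the prefactor $\|\mathrm{inv}(p(u^{(n)}))\|$ I would use the Lipschitz constant $\overline\theta$ of $\mathrm{inv}\circ p$ together with the triangle inequality: $\|\mathrm{inv}(p(u^{(n)}))\|\le \rho^\ast+\overline\theta\|u^{(n)}-u^\ast\|$. The first smallness assumption $\|u^{(n)}-u^\ast\|\le \rho^\ast/\overline\theta$ upgrades this to $\|\mathrm{inv}(p(u^{(n)}))\|\le 2\rho^\ast$. Multiplying the two bounds produces $\|u^{(n+1)}-u^\ast\|\le 2\rho^\ast\cdot\tfrac{\theta}{2}\|u^{(n)}-u^\ast\|^2 = \rho^\ast\theta\|u^{(n)}-u^\ast\|^2$, which is \eqref{eq:NewtonQuadraticConvergence} for $n=0$.

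The main subtlety, and what I would treat as the real obstacle, is closing the induction: the estimate is only useful if every iterate stays inside $O$ and continues to satisfy the smallness assumption $\|u^{(n)}-u^\ast\|\le\min(\rho^\ast/\overline\theta,\,1/(2\theta\rho^\ast))$. Here the second part of the hypothesis is crucial: it gives $\rho^\ast\theta\|u^{(0)}-u^\ast\|\le 1/2$, so the one-step estimate implies $\|u^{(1)}-u^\ast\|\le\tfrac12\|u^{(0)}-u^\ast\|$, and in particular $\|u^{(1)}-u^\ast\|$ still satisfies the same smallness bound. By induction the whole orbit stays in $O$, the two Lipschitz bounds apply at each step, and \eqref{eq:NewtonQuadraticConvergence} holds for all $n$, yielding quadratic convergence to $u^\ast$.
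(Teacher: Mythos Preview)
Your proof is correct and follows essentially the same route as the paper's: both identify $f'=p$, write $u^{(n+1)}-u^\ast = \mathrm{inv}(p(u^{(n)}))\bigl[p(u^{(n)})(u^{(n)}-u^\ast)-(f(u^{(n)})-f(u^\ast))\bigr]$, bound the bracket by $\tfrac{\theta}{2}\|u^{(n)}-u^\ast\|^2$ via the fundamental theorem of calculus and the Lipschitz constant of $p$, bound the prefactor by $2\rho^\ast$ via the Lipschitz constant $\overline\theta$, and close the induction with the second smallness hypothesis. The only cosmetic difference is the parametrisation of the segment (your $1-t$ versus the paper's $t$), which of course integrates to the same $1/2$.
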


\begin{proof}
The statement follows from an adaption of the standard estimates for Newton's method (see \cite[\S 4]{Deuflhard2003}, for instance) to the considered setting. A detailed proof of the Proposition is contained in the Appendix (Preprint/ArXiv version only).	
\end{proof}

\begin{remark}
The assumptions formulated in \cref{prop:SolveDELNewton} are sufficient but not sharp. The main purpose of the proposition is to identify quantities that are related to the efficiency of our numerical solvers and to use this knowledge in the design of machine learning architectures.
\end{remark}

\section{Machine learning architecture for discrete field theories}\label{sec:mlarchitectures}

We model a discrete Lagrangian $L_d$ as a neural network and fit its parameters
\begin{itemize}
\item such that the discrete Euler--Lagrange equations \eqref{eq:DEL} for the learned $L_d$ are consistent with observed solutions $U=(u^{i}_{j})$ of \eqref{eq:DEL}
\item  and such that \eqref{eq:DEL} is easily solvable for $u^{i+1}_{j}$ using iterative numerical methods, so that we can use the discrete field theory to predict solutions via forward propagation of initial conditions (see \cref{rem:SolveDEL}).
\end{itemize}

For given observations $U^{(1)},\ldots,U^{(K)}$ with $U^{(k)}=({u^{i}_{j}}^{(k)})$ 
on the interior mesh $X_\Delta$, we consider the loss function $\ell = \ell_{\mathrm{DEL}} + \ell_{\mathrm{reg}}$ consisting of a data consistency term $\ell_{\mathrm{DEL}}$ and a regularising term $\ell_{\mathrm{reg}}$. We have
\begin{equation}\label{eq:lossDEL}
\ell_{\mathrm{DEL}} = \sum_{k=1}^{K} \sum_{i=1}^{N-1} \sum_{j=0}^{M-1} \mathrm{DEL}(L_d)^{i}_{j}(U^{(k)})^2
\end{equation}
with $\mathrm{DEL}(L_d)^{i}_{j}$ from \eqref{eq:DEL}. $\ell_{\mathrm{DEL}}$ measures how well $L_d$ fits to the training data.

Since a discrete Lagrangian $L_d$ is not uniquely determined by the system's motions by \Cref{rem:LdGaugeFreedom} (indeed, $L_d \equiv \mathrm{const}$ is consistent with any observed dynamics), careful regularisation is required.
Indeed, in \cite{LagrangianShadowIntegrators} we demonstrate in an ode setting that if care is not taken, then machine learned models for $L_d$ can be unsuitable for numerical purposes and amplify errors of numerical integration schemes.
In view of \cref{prop:SolveDELNewton}, we aim to minimize $\rho^\ast$ (see \eqref{eq:rho}) and define the regularisation term
\begin{equation}\label{eq:lossReg}
\ell_{\mathrm{reg}} 
= \sum_{k=1}^{K} \sum_{i=1}^{N-1} \sum_{j=0}^{M-1}
\left\|\left(\frac{\p^2}{\p u^{i}_{j}\p u^{i+1}_{j}}
L_d\left({u^{i}_{j}}^{(k)},{u^{i+1}_{j}}^{(k)},{u^{i}_{j+1}}^{(k)}\right) \right)^{-1}\right\|^{2}.
\end{equation}
In our experiments, we use the spectral norm in \eqref{eq:lossReg}, which is the operator norm induced by the standard Eucledian vector norm on $\R^d$.
Let $A_\mathrm{reg}^{i,j,k} := \frac{\p^2}{\p u^{i}_{j}\p u^{i+1}_{j}}
L_d\left({u^{i}_{j}}^{(k)},{u^{i+1}_{j}}^{(k)},{u^{i}_{j+1}}^{(k)}\right)$, i.e.\ the summands of $\ell_{\mathrm{reg}} $ are $\|(A_\mathrm{reg}^{i,j,k})^{-1}\|^2 = \frac 1 {\lambda_{\min}^2}$, where $\lambda_{\min}$ is the singular value $\lambda_{\min}$ of $A_\mathrm{reg}^{i,j,k}$ with the smallest absolute norm.
If $u^{i}_{j} \in \R^d$ with $d=1$, then $\|(A_\mathrm{reg}^{i,j,k})^{-1}\|$ can be evaluated without problems.
Otherwise, $\lambda_{\min}^2$ is computed as the smallest eigenvalue of the symmetric matrix $(A_\mathrm{reg}^{i,j,k})^\top A_\mathrm{reg}^{i,j,k}$. The eigenvalue can be approximated by inverse matrix vector iterations \cite[\S 5]{Deuflhard2003} or computed exactly if the dimension $d$ is small.

\section{Periodic travelling waves}\label{sec:TW}

For simplicity, we continue within the two-dimensional space time domain $X=[0,T] \times [0,l]/\{0,l\}$ with periodic boundary conditions in space introduced in \cref{sec:DiscreteDEL}. A periodic travelling wave (TW) of a pde on $X$ is a solution of the form $u(t,x)=f(x-ct)$ for $c \in \R$ and with $f \colon [0,l]/\{0,l\} \to \R^d$ defined on the periodic spatial domain. 
Due to their simple structure, TWs are important solutions to pdes. While the defining feature of a TW is its symmetry $u(t+s,x+s c)=u(t,x)$ for $s \in \R$, evaluated on a mesh $X_\Delta$, no such structure is evident unless the quotient $c\Delta t / \Delta x$ is rational and $T$ sufficiently large. However, after a discrete field theory is learned defined by its discrete Lagrangian $L_d$, it is of interest, whether the underlying continuous PDE has TWs. As in \cite{PDEBEA} we define {\em shadow travelling waves} (TWs) of \eqref{eq:DEL} as solutions to the functional equation
\begin{equation}\label{eq:TWFunctional}
\begin{split}
0=&\p_1 L_d(f(\xi),f(\xi-c\Delta t),f(\xi+\Delta x))\\
+&\p_2 L_d(f(\xi+c\Delta t),f(\xi),f(\xi+c\Delta t+\Delta x))\\
+&\p_3 L_d(f(\xi-\Delta x),f(\xi-c\Delta t-\Delta x),f(\xi))
\end{split}
\end{equation}
where $\p_j L_d$ denotes the partial derivative of $L_d$ with respect to its $j$th slot.


\begin{example}\label{ex:twDiscreteWave}
A Fourier series ansatz for $f$ reveals that the discrete wave equation \eqref{eq:waveDiscretised} with potential
$V(u)=\frac 12 u^2$
away from resonant cases TWs are $u(t,x)=f(x-c_n t)$ with
\begin{equation}\label{eq:twDiscreteWave}
f(\xi)=\alpha \sin(\kappa_n \xi) + \beta \cos(\kappa_n \xi),\;
\kappa_n = \frac{2 \pi n}{l}, \; n \in \Z, \alpha,\beta \in \R
\end{equation}
and with wave speed $c_n$ a real solution of
\begin{equation}\label{eq:twDiscreteWaveC}
\cos(\kappa_n c_n \Delta t) = 1 - \frac{\Delta t^2}{2}+ \frac{\Delta t^2}{\Delta x^2}(\cos(\kappa_n \Delta x))-1).
\end{equation}
A contour plot for $n=1$ is shown to the left of \cref{fig:TravellingWaveExperiment}.

\end{example}

\begin{remark}\label{rem:Palais}
The TW equation \eqref{eq:TWFunctional} inherits variational structure from the underlying PDE:
an application of Palais' principle of criticality \cite{palais1979}
of the action of $(\R,+)$ on the Sobolev space $H^1(X_c,\R)$ with $X_c=[0,l/c]\times [0,l]$ defined by $(s.u)(t,x):=u(t+s,x+cs)$ to the functional $S(u)=\int_{X_c}L_d(u(t,x),u(t+\Delta t,x),u(t,x+\Delta x)) \d t$ reveals that \eqref{eq:TWFunctional} is governed by a formal 1st order variational principle.
This is investigated more closely in \cite{PDEBEA}.
\end{remark}

To identify TWs in a machine learned model of a discrete field theory, we make an ansatz of a discrete Fourier series $f(\xi) = \sum_{m=-\frac{M-1}{2}}^{-\frac{M}{2}} \hat f_{|m|} \exp(m \frac{2\pi i}{l} \xi)$, where bounds of the sum are rounded such that we have $M$ summands. To locate a TW, the loss function $\ell_{\mathrm{TW}}(c,\bm{\hat{f}})+\ell_{\mathrm{TW}}^{\mathrm{reg}}(c,\bm{\hat{f}})$ is minimised with
\begin{equation}\label{eq:lossTW}
\ell_{\mathrm{TW}}(c,\bm{\hat{f}})
= \sum_{i=1}^{N-1}\sum_{j=0}^{M-1} \| \mathrm{DEL}^{i}_{j}(U) \|^2,
\qquad
U=\big(f(i\Delta t - c j \Delta x) \big)_{0\le i \le N}^{0 \le j \le M-1}
\end{equation}
and regularisation $\ell_{\mathrm{TW}}^{\mathrm{reg}}=\exp(-100 \|U\|_{l^2}^2)$ with discrete $l^2$-norm $\| \cdot \|_{l^2}$ to avoid trivial solutions. Here $\bm {\hat{f}} = (\hat{f}_m)_m$.

\section{Experiment}\label{sec:Experiment}

\subsubsection{Creation of training data}
We use the space-time domain $X$ (\cref{sec:DiscreteDEL}) with $T=0.5$, $l=1$, $\Delta x=0.05$, $\Delta t=0.025$.
To obtain training data that behaves like discretised smooth functions, we compute $K=80$ solutions to the discrete wave equation (\cref{ex:waveDiscrete}) with potential $V(u)=\frac 12 u^2$ on the mesh $X_\Delta$ from initial data ${\bm u^0} = (u^{0}_{j})_{0\le j \le M-1}$ and ${\bm u^1} = (u^{1}_{j})_{0\le j \le M-1}$.
To obtain ${\bm u^0}$ we sample $r$ values from a standard normal distribution. Here $r$ is the dimension of the output of a real discrete Fourier transformation of an $M$-dimensional vector. These are weighted by the function $m \mapsto M \exp(-2 j^4)$, where $m=0,\ldots,r-1$ is the frequency number.
The vector ${\bm u^0}$ is then obtained as the inverse real discrete Fourier transform of the weighted frequencies.
To obtain ${\bm u^1}$ an initial velocity field ${\bm v^0} = (v^{0}_{j})_{0\le j \le M-1}$ is sampled from a standard normal distribution. Then we proceed as in a variational discretisation scheme \cite{MarsdenWestVariationalIntegrators} applied to the Lagrangian density $L$ of the continuous wave equation (\cref{ex:wave}): to compute conjugate momenta we set $L_\Sigma({\bm u},{\bm v}) = \sum_{j=0}^{M-1} \Delta x L(u_j,v_j)$ and compute ${\bm p^0} = \frac{\p L_\Sigma}{\p {\bm v^0}}({\bm u^0},{\bm v^0})$. Then ${\bm p^0} = -L_\Sigma({\bm u^1},({\bm u^1} - {\bm u^0})/\Delta t)$ is solved for ${\bm u^1}$. A plot of an element of the training data set is displayed in \cref{fig:WaveExperiment}.
(TWs are {\em not} part of the training data.)


\begin{figure}
	\centering
	\includegraphics[width=0.3\linewidth]{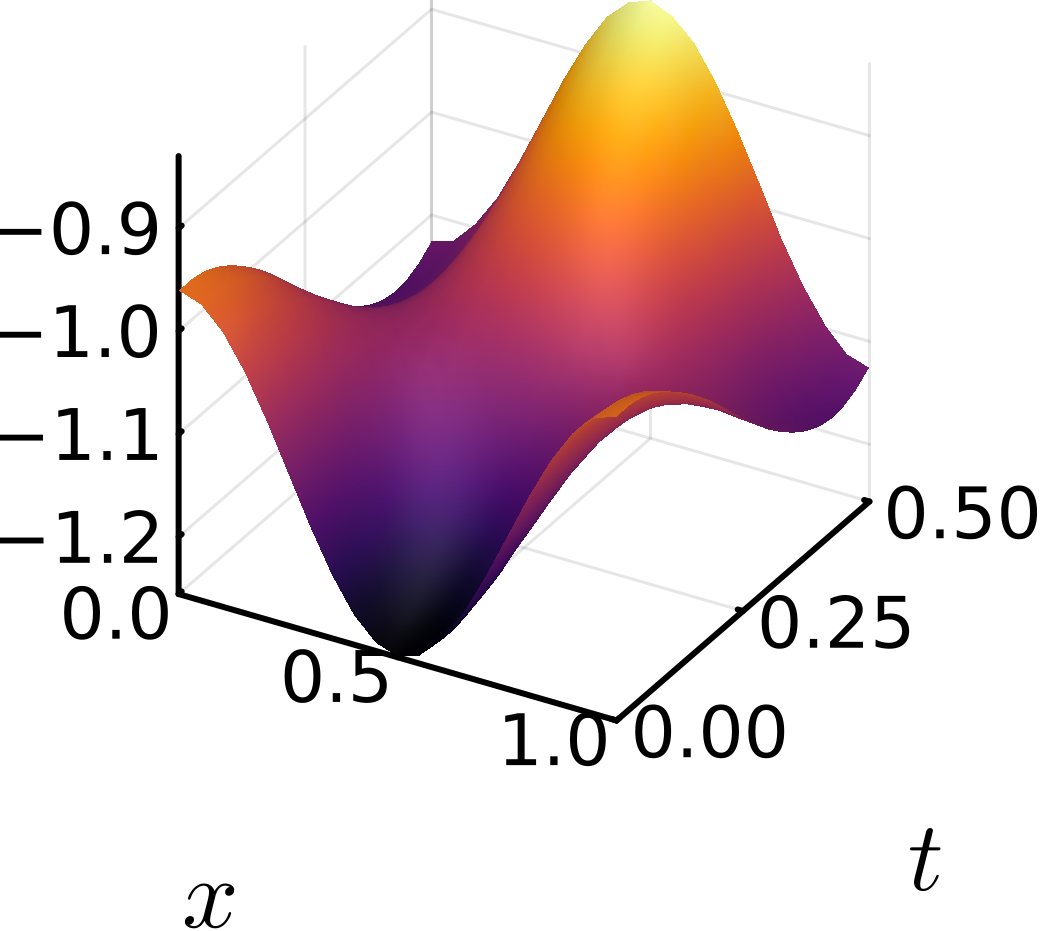}
	\includegraphics[width=0.3\linewidth]{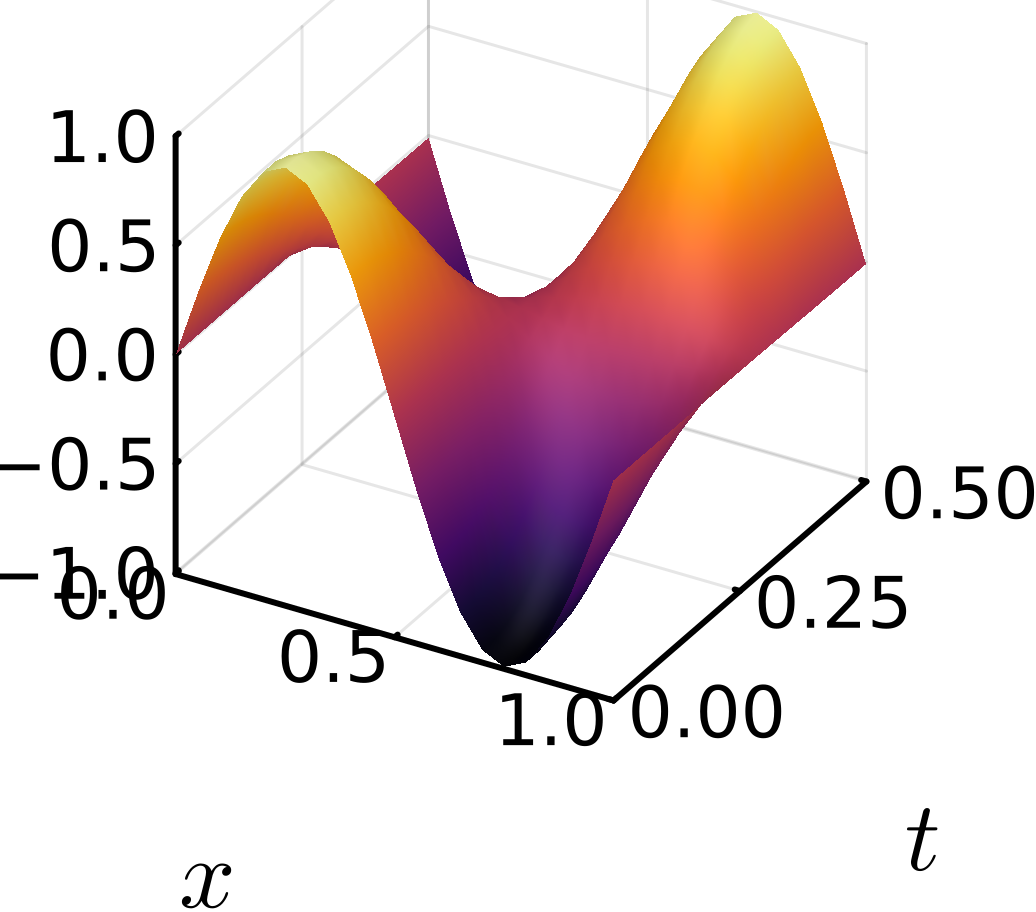}
	\includegraphics[width=0.3\linewidth]{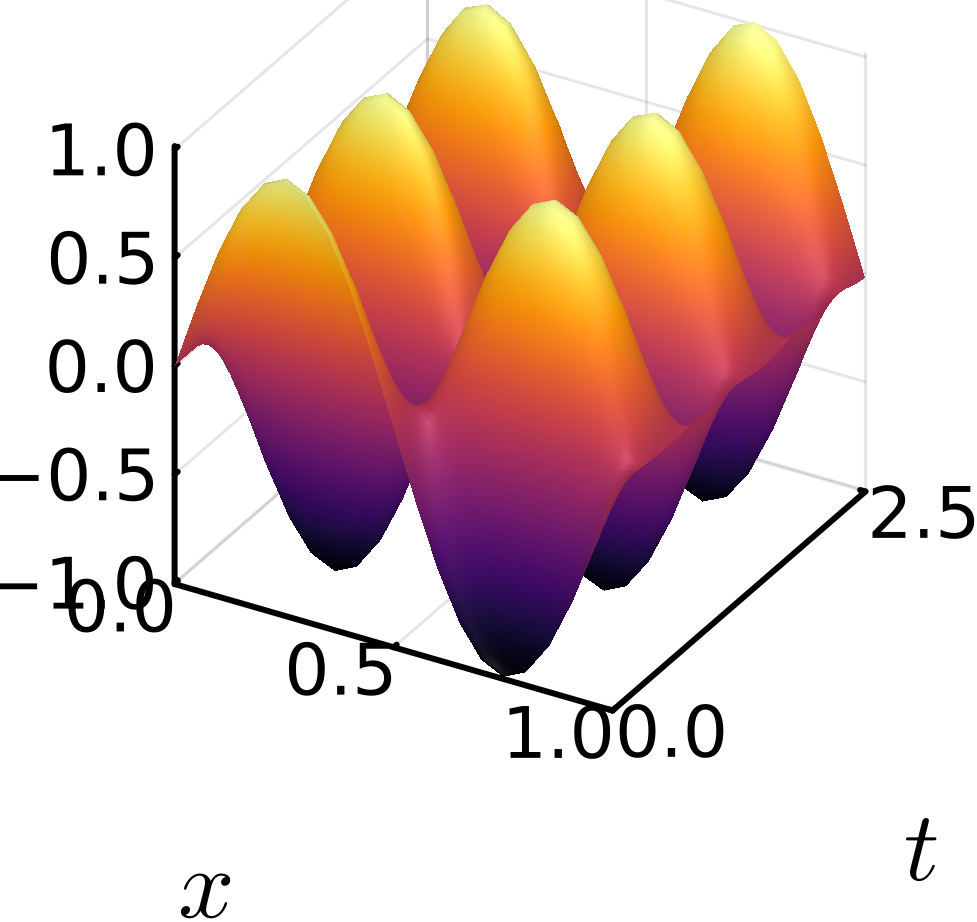}
	\caption{Left: Element of training data set. Centre: Predicted solution to unseen initial values. Right: Continued solution from centre plot outside training domain}\label{fig:WaveExperiment}
\end{figure}

\subsubsection{Training and Evaluation}
A discrete Lagrangian $L_d \colon \R \times \R \times \R \to \R$ is modelled as a three layer feed-forward neural network, where the interior layer has 10 nodes (160 parameters in total). 
It is trained on the aforementioned training data set and loss function $\ell = \ell_{\mathrm{DEL}}+\ell_{\mathrm{reg}}$ using the optimiser {\tt adam}. We perform 1320 epochs of batch training with batch size 10. For the trained model we have $\ell_{\mathrm{DEL}} \approx 8.6\cdot 10^{-8}$ and $\ell_{\mathrm{reg}} \approx 1.4 \cdot 10^{-7}$.
To evaluate the performance of the trained model for $L_d$, we compute solutions to initial data by forward propagation (\cref{rem:SolveDEL}) and compare with solutions to the discrete wave equation (\cref{ex:waveDiscrete}).
For initial data ${\bm u^0}$, ${\bm u^1}$ {\em not} seen during training, the model recovers the exact solution up to an absolute error $\|U-U_{\mathrm{ref}}\|_\infty < 0.012$ on $X_\Delta$ and up to $\|U-U_{\mathrm{ref}}\|_\infty < 0.043$ on an extended grid with $T_{\mathrm{ext}} =2.5$ (\cref{fig:WaveExperiment}).

\begin{figure}\centering
	\includegraphics[width=0.3\linewidth]{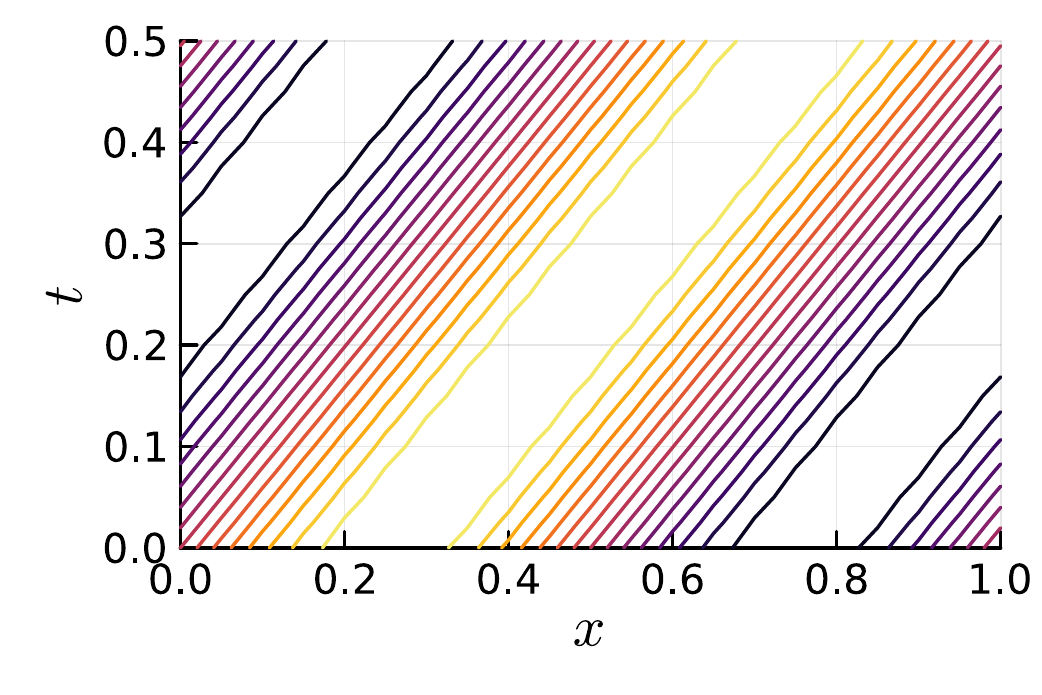}\;
	\includegraphics[width=0.3\linewidth]{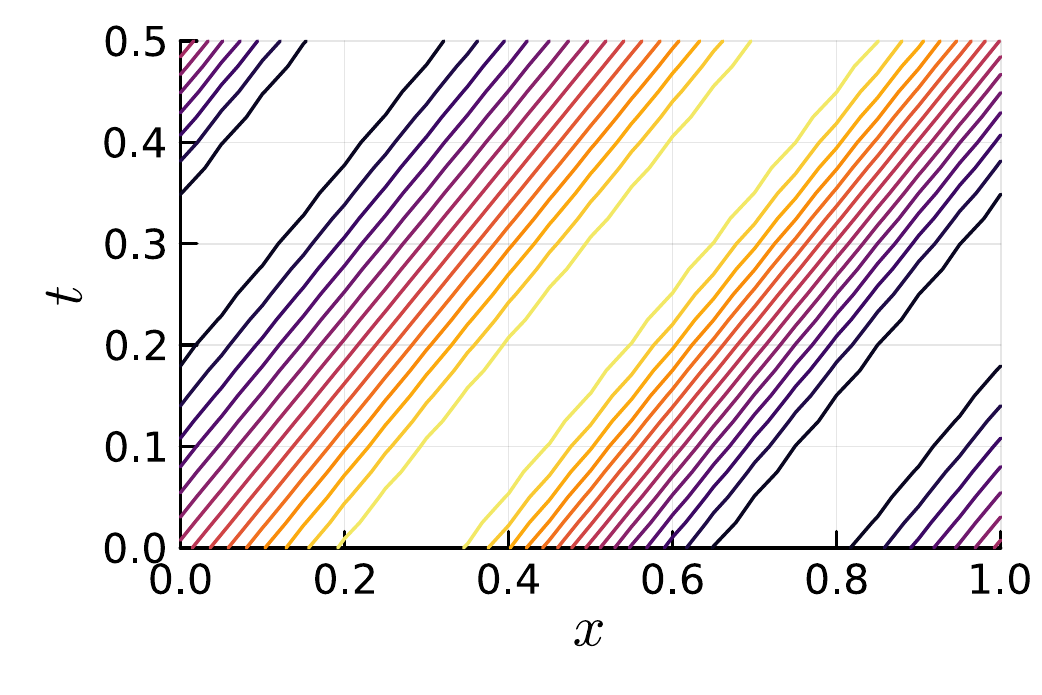}\;
	\includegraphics[width=0.27\linewidth]{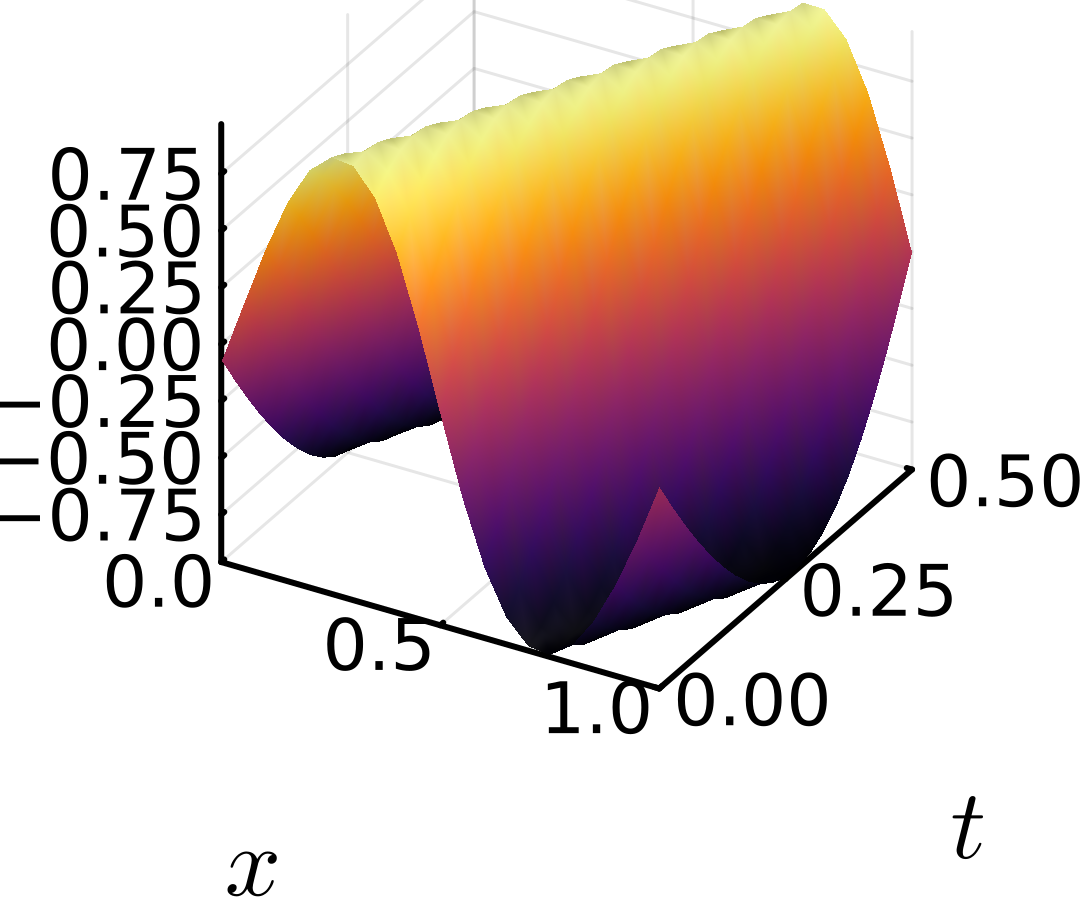}
	\caption{Left: Reference TW. Centre and Right: Identified TW in learned model}\label{fig:TravellingWaveExperiment}
\end{figure}

We have $\max_{i,j} \|\mathrm{DEL}(L_d)_{i,j}(U_{\mathrm{ref}}^{\mathrm{TW}})\| <0.004$, where $U_{\mathrm{ref}}^{\mathrm{TW}}$ is the TW from \cref{ex:twDiscreteWave} ($n=1$). This shows that the exact TW is a solution of the learned discrete field theory. This is remarkable since TWs are not part of the training data. However, \cref{rem:Palais} hints that the ansatz of an autonomous $L_d$ favours TWs as it contains the right symmetries.
Using the method of \cref{sec:TW}, a TW $U^{\mathrm{TW}}$ and speed $c$ can be found numerically: with $(c_1,U_{\mathrm{ref}}^{\mathrm{TW}})$ as an initial guess with normally distributed random noise ($\sigma =0.5$) added to the Fourier coefficients of $U_{\mathrm{ref}}^{\mathrm{TW}}$ and to $c_1$ , we find $U^{\mathrm{TW}}$ and $c$ for the learned $L_d$ with errors $\|U^{\mathrm{TW}}-U_{\mathrm{ref}}^{\mathrm{TW}}\|_\infty < 0.12$ and $|c-c_1| < 0.001$ (using $10^4$ epochs of {\tt adam}).
(\cref{fig:TravellingWaveExperiment})


\section{Conclusion and future work}
We present an approach to learn models of dynamical systems that are governed by an (a priori unknown) variational PDE from data. This is done by learning a neural-network model of a discrete Lagrangian such that the discrete Euler--Lagrange equations (DELs) are fulfilled on the training data. 
As DELs are local, the model can be efficiently trained and used in simulations. Even though the underlying system is infinite dimensional, model order reduction is not required.
It would be interesting to relate the implicit locality assumption of our data-driven model to the more widely used approach to fit a dynamical system on a low-dimensional latent space that is identified using model order reduction techniques \cite{Blanchette2020,Glas2023,Carlsberg2015,Blanchette2022,Sharma2022LagrangianROM}.

Our approach fits in the context of physics-informed machine learning because the data-driven model has (discrete) variational structure by design. However, our model is numerical analysis-informed as well: since our model is discrete by design, it can be used in simulations without an additional discretisation step.
Based on an analysis of Newton's method when used to solve DELs, we develop a regulariser that rewards numerical regularity of the model. The regulariser is employed during the training phase. It plays a crucial role to obtain a non-degenerate discrete Lagrangian.

Our work provides a proof of concept illustrated on the wave equation. It is partly tailored to the hyperbolic character of the underlying PDE.
It is future work to adapt this approach to dynamical systems of fundamentally different character (such as parabolic or elliptic behaviour) by employing discrete Lagrangians and regularisers that are adapted to the information flow within such PDEs. 

Finally, we clarify the notion of travelling waves (TWs) in discrete models and show how to locate TWs in data-driven models numerically. Indeed, in our numerical experiment the data-driven model contains the correct TWs even though the training data does not contain any TWs. In future work it would be interesting to develop techniques to identify more general highly symmetric solutions in data-driven models and use them to evaluate qualitative aspects of learned models of dynamical systems.

\subsubsection{Source Code} 
\url{https://github.com/Christian-Offen/LagrangianDensityML}

\subsubsection{Acknowledgements} 
C. Offen acknowledges the Ministerium für Kultur und Wissenschaft des Landes Nordrhein-Westfalen and computing time provided by the Paderborn Center for Parallel Computing (PC2).

\appendix
\section{Proofs}\label{appendix1}
\begin{proof}[\cref{prop:SolveDELNewton}]
	We adapt the standard estimates for Newton's method (see \cite[\S 4]{Deuflhard2003}, for instance) to the considered setting.
	Let $f \colon \mathcal{O} \to \R^d$ with
	\[
	f(u) = \frac{\p}{\p u^i_j}
	\big(
	L_d(u^{i}_{j},u,u^{i}_{j+1})
	+ L_d(u^{i-1}_{j},u^{i}_{j},u^{i-1}_{j+1})
	+ L_d(u^{i}_{j-1},u^{i+1}_{j-1},u^{i}_{j})
	\big).
	\]
	With this definition, $f(u^\ast)=0$, $\theta$ is a Lipschitz constant for $\D f \colon \mathcal{O} \to \R^{d \times d}$, $u \mapsto \D f(u)$, $\overline{\theta}$ is a Lipschitz constant for $\mathrm{inv} \circ \D f \colon \mathcal{O} \to \R^{d \times d}$, $u \mapsto \D f(u)^{-1}$, and $\rho^\ast = \| \D f(u^\ast)^{-1} \|$.
	Here $\D f(u)$ denotes the Jacobian matrix of $f$ at $u \in \mathcal O$.
	
	Assume that for $n \in \N$ an iterate $u^{(n)} \in \mathcal{O}$ fulfils $\|u^{(n)}-u^\ast\| \le \min\left(\frac{\rho^\ast}{\overline{\theta}}, \frac{1}{2\theta\rho^\ast}\right)$. Then \begin{equation*}\begin{split}
			\| \D f(u^{(n)})^{-1} &\| 
			= \| \D f(u^{(n)})^{-1} - \D f(u^{\ast})^{-1} + \D f(u^{\ast})^{-1}\| \\
			&\le \| \D f(u^{(n)})^{-1} - \D f(u^{\ast})^{-1} \| + \rho^\ast 
			\le \underbrace{\overline{\theta} \|u^{(n)} - u^\ast \|}_{\le \rho^\ast} + \rho^\ast \le 2 \rho^\ast.
	\end{split}\end{equation*}
	The next iterate $u^{(n+1)} = u^{(n)} - \D f(u^{(n)})^{-1} f(u^{(n)})$ can be bounded:
	\begin{align*}
		\| &u^{(n+1)} - u^\ast \| 
		= \| u^{(n)} - u^\ast - \D f(u^{(n)})^{-1} (f(u^{(n)}) - \underbrace{f(u^\ast)}_{=0})  \| \\
		&\le \underbrace{\|\D f(u^{(n)})^{-1}  \|}_{\le 2 \rho^\ast} \Big\| \underbrace{\D f(u^{(n)})(u^{(n)}-u^\ast)}_{=\int_0^1 \D f(u^{(n)})(u^{(n)}-u^\ast) \d t } - \underbrace{(f(u^{(n)}) - f(u^\ast) )}_{\int_0^1 \D f (u^{(n)} + t (u^\ast -u^{(n)}))(u^{(n)} - u^\ast) \d t }  \Big\| \\
		&\le 2 \rho^\ast \left \| \int_0^1 \left(\D f(u^{(n)}) - \D f(u^{(n)} + t(u^\ast - u^{(n)}))\right)(u^{(n)}-u^\ast) \d t \right\|\\
		&\le 2 \rho^\ast  \int_0^1 \underbrace{\left \|\D f(u^{(n)}) - \D f(u^{(n)} + t(u^\ast - u^{(n)}))\right\|}_{\le \theta t \|u^{(n)}-u^\ast\| } \left\|u^{(n)}-u^\ast\right\| \d t\\
		&\le 2 \rho^\ast \theta \left\|u^{(n)}-u^\ast\right\|^2  \int_0^1 t \d t\\
		&=\rho^\ast \theta \left\|u^{(n)}-u^\ast\right\|^2
	\end{align*}
	Moreover, since $\left\|u^{(n)}-u^\ast\right\| \le (2\theta \rho^\ast)^{-1}$, we have
	$\|u^{(n+1)} - u^\ast \| \le  \frac 12 \|u^{(n)} - u^\ast \|$. By induction, the Newton Iterations converge against $u^\ast$ and \eqref{eq:NewtonQuadraticConvergence} holds true.
\end{proof}

%
%
%
\bibliographystyle{splncs04}
\bibliography{resources}

\end{document}